\g@addto@macro{\endabstract}{\@setabstract}
\newcommand{\authorfootnotes}{\renewcommand\thefootnote{\@fnsymbol\c@footnote}}%
\newtheorem{prop}{Proposition}[section]
\newtheorem{lemma}[prop]{Lemma}
\newtheorem{theorem}[prop]{Theorem}
\newtheorem{defn}[prop]{Definition}
\newtheorem{corollary}[prop]{Corollary}
\newtheorem{proposition}[prop]{Proposition}
\newtheorem{remark}[prop]{Remark}
\newcommand{\BX}{{\bf X}}
\newcommand{\BY}{{\bf Y}}
\begin{document}

\title[Location of Supremum of sssi Processes]{Location of the Path Supremum for Self-similar Processes with Stationary Increments}
\thanks{Email: yi.shen@uwaterloo.ca}
\renewcommand\Authands{ and }

\thanks{}

\subjclass[2010]{Primary 60G18,  60G55, 60G10}
\keywords{self-similar processes, stationary increment processes, random locations}
\vspace{.5ex}

\maketitle
\begin{center}

  \normalsize
  \authorfootnotes
  Yi Shen\par \bigskip
  Department of Statistics and Actuarial Science, University of Waterloo. Waterloo, ON N2L 3G1, Canada. \par\bigskip
\thanks{Email: yi.shen@uwaterloo.ca}
\thanks{This work is supported by NSERC grant.}

\end{center}

\begin{abstract}
In this paper we consider the distribution of the location of the path supremum in a fixed interval for self-similar processes with stationary increments. To this end, a point process is constructed and its relation to the distribution of the location of the path supremum is studied. Using this framework, we show that the distribution has a spectral-type representation, in the sense that it is always a mixture of a special group of absolutely continuous distributions, plus point masses on the two boundaries. Bounds on the value and the derivatives of the density function are established. We further discuss self-similar L\'{e}vy processes as an example. Most of the results in this paper can be generalized to a group of random locations, including the location of the largest jump, \textit{etc}.
\end{abstract}


\section{Introduction}
Self-similar processes are stochastic processes whose distributions do not change under proper rescaling in time and space. The study of self-similar processes as a unified concept dates back to \cite{lamperti:1962}, and this class of processes have attracted attention of researchers from various fields since then, due to their theoretical tractability and broad applications. The book \cite{embrechts:maejima:2002}, the lecture note \cite{chaumont:2010}, and the review papers \cite{embrechts:maejima:2000} and \cite{pardo:2007} are all excellent sources for general introduction and existing results. A special subclass of self-similar processes, self-similar processes with stationary increments, or \textit{ss,si} processes in short, are of particular interest. They combine the two probabilistic symmetries given by the self-similarity and the stationary increments, and include famous examples such as fractional Brownian motions and self-similar L\'{e}vy processes.

In this paper, we consider the distributional properties of the location of the path supremum over a fixed interval for self-similar processes with stationary increments. Compared to the values of the extremes, their locations received relatively less attention. On one hand, there exist results for some special cases. For instance, the distribution of the location of path supremum for a Brownian motion is well-known as the (third) arcsin law. More generally, the result for self-similar L\'{e}vy processes was given in \cite{chaumont:2013}. While the exact result for fractional Brownian motions remains unclear, approximate distributions were studied in \cite{delorme:wiese:2015} using perturbation theory. On the other hand, there has not been any structural study of the distribution of the location of path supremum for general self-similar processes. Our goal in this paper is to establish a framework which works for general self-similar processes, and to derive properties for the distribution of the location of path supremum.

\section{Basic Settings}
Let ${\mathbf X}=\{X(t)\}_{t\in\mathbb R}$ be a stochastic process defined on some probability space $(\Omega, \mathcal F, P)$ and taking values in $\mathbb R$, whose path is almost surely c\`{a}dl\`{a}g. $\{X(t)\}_{t\in\mathbb R}$ is said to be self-similar, if $\{X(at)\}$ and $\{a^HX(t)\}$ have the same distribution, for some $H\geq 0$. The constant $H$ is called the exponent of the self-similar process. In this work, $\{X(t)\}_{t\in \mathbb R}$ is always assumed self-similar. We further assume that $\{X(t)\}_{t\in\mathbb R}$ has stationary increments. Such a self-similar process with stationary increments is often referred as a $H-ss,si$ process, or a $ss,si$ process when it is not necessary to specify $H$.

We are interested in the distribution of the location of the path supremum for the $ss,si$ process $\{X(t)\}_{t\in\mathbb R}$ over an interval with a fixed length, say, $T>0$. By the stationarity of the increments, the distribution will be the same for any interval with length $T$, and consequently, the interval can be chosen as $[0,T]$. In this case, we denote the location of the path supremum of $\{X(t)\}_{t\in\mathbb R}$ over $[0,T]$ by
$$
\tau_{{\mathbf X},T}:=\inf\left\{t\in[0,T]: \limsup_{s\to t} X(s)=\sup_{s\in[0,T]}X(s)\right\},
$$
where the infimum means that in case where the supremum is achieved at multiple locations, the leftmost point among them is taken.

Alternatively, one can first define
$$
\tau_{{\mathbf X},T}:=\inf\left\{t\in[0,T]: X(t)=\sup_{s\in[0,T]}X(s)\right\}
$$
for stochastic processes with upper semi-continuous paths, in which case the supremum can be indeed achieved at some point. Then for a c\`{a}dl\`{a}g process $\BX$, define its location of path supremum over $[0,T]$ as that of the modified process
$$
X'(t)=X(t-)\vee X(t),
$$
which is now upper semicontinuous. Its path have the property that the left limit and right limit $X'(t-)$ and $X'(t+)$ exists for all $t\in\mathbb R$, and $X'(t)=X'(t-)\vee X'(t+)$. Denote by $D'$ the collection of all such functions on $\mathbb R$. Note that if $\BX$ is $ss,si$, so is $\BX'$. Therefore, in the rest of the paper, we can always assume that the process $X$ has paths in $D'$, thus in particular, is upper semicontinuous.

It is not difficult to check that $\tau_{\BX, T}$ is a well-defined random variable. Moreover, since the process is self-similar, the distributions of $\tau_{\BX, T}$ and $\tau_{\BX, 1}$ are the same up to a scaling. Therefore we focus on the case where $T=1$, and use the simplified notation $\tau_{\BX}=\tau_{\BX, 1}$.

Additionally, define set
$$
M=\{\omega\in\Omega: X(t_i)=\sup_{s\in[0,1]}X(s) \text{ for at least two different } t_1, t_2\in [0,1]\},
$$
and assume that the process $\{X(t)\}_{t\in\mathbb R}$ satisfies\\

\textbf{Assumption U.} $P(M)=0$.\\

Most processes that we encounter do satisfy Assumption U. A necessary and sufficient condition for this assumption can be found in \cite{pimentel:2014}. Under Assumption U, the supremum in $[0,1]$ is attained at a unique point, and the infimum in the definition of $\tau_{\BX}$ can be removed. Notice that Assumption U also excludes the case where $H=0$ provided that the process is stochastically continuous at $t=0$, since in that case $X(t)$ must be a constant, which is trivial anyway. Hence we always assume $H>0$ for the rest of the paper.

In \cite{shen:2016}, the author showed that for a stochastic process $\{X(t)\}_{t\in\mathbb R}$ with stationary increments, the distribution of $\tau_{\BX}$ can have point masses on the two boundaries 0 and 1, but must be absolutely continuous in the interior of the interval $[0,1]$, and its density function, denoted as $f(t)$, can be taken as the right derivative of the cumulated distribution function of $\tau_{\BX}$. Moreover, this version of the density function is c\`adl\`ag everywhere.

In the presence of self-similarity, it turns out that the distribution of $\tau_{\BX}$ is closely related to a point process, constructed as below.

For $t\in \mathbb R$, define $l(t)=\inf\{s>0:X(t-s)\geq X(t)\}$ and $r(t)=\inf\{s>0: X(t+s)\geq X(t)\}$, with the tradition that $\inf(\phi)=+\infty$. Intuitively, $l(t)$ and $r(t)$ are the distances by which the process will return to the level $X(t)$ or higher at the left and the right of the point $t$. It is clear that $t$ is a (strict) local maximum if and only if both $l(t)$ and $r(t)$ are strictly positive.

Let $S=\{s\in{\mathbb R}: l(s)>0, r(s)>0\}$ be the set of all the local maxima of $\BX$. Notice that since $|s_1-s_2|\geq \min\{l(s_1), r(s_1), l(s_2), r(s_2)\}$ for any $s_1,s_2\in S$, $S$ is at most countable. For each $s_i\in S\cap[0,1]$, define point $\epsilon_i=(l(s_i), r(s_i))$, then $\epsilon_i$ is a point in the first quadrant of $\overline{\mathbb R}^2$, where $\overline{\mathbb R}=[-\infty,
\infty]$. The collection of these points, denoted as $\mathcal E$, or more precisely, the (random) counting measure determined by it, denoted as $\xi:=\sum_{\epsilon_i\in \mathcal E}\delta_{\epsilon_i}$, forms a point process in $({\overline{\mathbb R}^+})^2$, where $\overline{\mathbb R}^+=(0,\infty]$.

Let $\nu$ be the mean measure of the point process $\xi$:
\begin{equation}\label{e:measure}
\nu(A)=E(\xi(A)) \quad \text{ for every } A\in \mathcal B((\overline{\mathbb R}^{+})^2),
\end{equation}
where $\mathcal B((\overline{\mathbb R}^{+})^2)$ is the Borel $\sigma-$algebra on $(\overline{\mathbb R}^+)^2$, with $+\infty$ treated as a separate point in $\overline{\mathbb R}^+$. Again, since the points in $\mathcal E$ have the property that $|s_1-s_2|\geq \min\{l(s_1), r(s_1), l(s_2), r(s_2)\}$, $\nu(A)$ is finite whenever the set $A$ is bounded away from the axes.

\section{Supremum location of ss,si processes}\label{s:general}

We start by exploring the structure of the measure $\nu$. Firstly, the following result shows that $\nu$ has mass 0 on the boundaries at $+\infty$. As a result, we can effectively remove infinity from the definition of $\xi$ when only $\nu$ is considered.

\begin{proposition}
Let $\BX$ be a $ss,si$ process satisfying Assumption U, and $\nu$ be defined as at the end of Section 2.. Then
$$\nu({\mathbb R}^+\times\{+\infty\})=\nu(\{+\infty\}\times {\mathbb R}^+)=\nu(\{+\infty\}\times \{+\infty\})=0.$$
\end{proposition}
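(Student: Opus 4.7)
My plan is to prove the first two equalities $\nu(\mathbb R^+\times\{+\infty\}) = \nu(\{+\infty\}\times\{+\infty\}) = 0$ simultaneously by establishing $E[N_1] = 0$, where $N_1 := \#\{s\in S\cap[0,1] : r(s) = +\infty\}$, since the decomposition
\[
E[N_1] = \nu(\mathbb R^+\times\{+\infty\}) + \nu(\{+\infty\}\times\{+\infty\})
\]
writes $E[N_1]$ as a sum of two non-negative quantities. The third equality $\nu(\{+\infty\}\times\mathbb R^+) = 0$ then follows from the symmetric argument with $l$ in place of $r$.

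The strategy has two parts. The first uses self-similarity and stationary increments to obtain a fixed-point identity $c := E[N_1] = 2c$, so that $c \in \{0, +\infty\}$. Introducing the random set $T := \{s\in\reals : l(s)>0,\; r(s)=+\infty\}$, I note that $l(\cdot), r(\cdot)$ are functionals of the increments of $\BX$, so stationary increments give $T - a \eqd T$ for every $a\in\reals$; self-similarity, via the scaled process $Y(t) := X(at)/a^H \eqd \BX$ together with the identities $l_Y(s)=l_X(as)/a$ and $r_Y(s)=r_X(as)/a$, gives $T/a \eqd T$ for every $a > 0$. Scale invariance yields $E[\#(T\cap[0,a])] = c$ for every $a > 0$; translation invariance together with $P(1 \in T) = 0$ (immediate from Fubini and the a.s.\ countability of $S$) gives $E[\#(T\cap[0,2])] = E[\#(T\cap[0,1])]+E[\#(T\cap(1,2])] = 2c$. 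Comparing these at $a = 2$ forces $c = 2c$.

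The second, more delicate part rules out $c = +\infty$ by a case analysis on $L := \limsup_{t\to\infty} X(t)$, which satisfies $L \eqd a^H L$ by self-similarity and therefore lies a.s.\ in $\{0, \pm\infty\}$. On $\{L = +\infty\}$, every $s$ admits some $t > s$ with $X(t) > X(s)$, so $r(s) < +\infty$ and $T = \emptyset$. On $\{L = -\infty\}$, the supremum of $\BX$ on $[0,\infty)$ is finite and attained at some $s^*$, and the scaling relation $s^*/a \eqd s^*$ forces $s^* = 0$ almost surely; combining the resulting shift-invariant inequalities $X(t) \leq X(a)$ for $t \geq a$ over a countable dense family of shifts together with \cadlag\ regularity forces $\BX$ to be almost surely non-increasing, so $S = T = \emptyset$. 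The degenerate case $\{L = 0\}$ yields $\BX \equiv 0$, because the identity $L_{Y_a} = L - X(a) \in \{0, \pm\infty\}$ for the shifted process $Y_a(t) := X(t+a) - X(a)$ together with the real-valuedness of $X(a)$ forces $X(a) = 0$ a.s.\ at every rational $a$, whence \cadlag\ regularity extends triviality to all of $\reals$; again $S = T = \emptyset$.

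The most delicate ingredient is the case analysis itself: the scaling identity $L \eqd a^H L$ restricts $L$ to three values but does not provide a zero-one law, so each sub-event must be handled separately, and the passages from countable dense families of shifted inequalities to a pathwise monotonicity (or triviality) statement on $\BX$ require careful use of \cadlag\ regularity to exclude upward jumps in the limit.
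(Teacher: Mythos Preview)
Your argument is correct, and in fact your second part already proves the full statement: the case analysis on $L=\limsup_{t\to\infty}X(t)$ shows that the random set $T=\{s:l(s)>0,\ r(s)=+\infty\}$ is almost surely empty in each of the three cases, hence $N_1=0$ a.s.\ and $c=0$. The fixed-point identity $c=2c$ in your first part is therefore redundant; it never gets used once you have $T=\emptyset$ a.s.

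The paper takes a different and somewhat more structural route. It isolates the notion of a \emph{compatible set}---a point-process functional of the path that commutes with both shifts and rescalings---and proves a general zero--one lemma: for any ss,si process, a compatible set is almost surely either empty or dense in $\mathbb R$. The proof is a two-line scaling argument on the distance $d(0,I(\BX))$. Applying this to $S_r=\{s\in S:r(s)=+\infty\}$, density would force $X(0)\geq X(t)$ for all $t>0$ (take $s\in S_r$, $s<t$, $s\to 0$), hence monotone paths by stationary increments, contradicting $S\neq\emptyset$; so $S_r=\emptyset$ a.s. The paper handles $\nu(\{+\infty\}\times\{+\infty\})$ separately by noting that a global maximum over all of $\mathbb R$ is at most one point, hence has zero intensity.

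Both arguments ultimately collapse to ``$r(s)=+\infty$ on a nontrivial set forces monotone paths,'' but the mechanisms differ. Your trichotomy on $L$ is direct and self-contained; the paper's dense-or-empty lemma is more modular and is in fact reused later (for the set $\{t:X(t)=\sup_{s\geq t}X(s)\}$ in the L\'evy section). Your approach buys a proof that avoids introducing a new abstraction; the paper's buys a reusable tool and a shorter argument once that tool is in hand.
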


\begin{proof}
First, notice that the set $S_\infty:=\{t\in {\mathbb R}: l(t)=r(t)=+\infty\}$ contains at most one single point, which is the location of the strict global maximum of the process $\BX$ over the whole real line. Thus by the stationarity of the increments, $P(S_\infty\cap[0,1]\neq\phi)=\nu(\{+\infty\}\times\{+\infty\})=0$.

For the rest part of the proposition and future use, we introduce the notion of ``compatible set''. Denote by $(D',\mathcal C)$ the collection of all modified c\`{a}dl\`{a}g paths equipped with the cylindrical $\sigma-$field, and $(M_P, {\mathcal M}_P)$ the standard measurable space for point processes on the real line.

\begin{defn}\label{d:compatible}
A compatible set $I$ is a measurable mapping from $(D',\mathcal C)$ to $(M_P, {\mathcal M}_P)$, satisfying
\begin{enumerate}
\item (Shift compatibility) $I(\theta_c\circ (g+d))=\theta_c\circ I(g)$  for all $g\in D'$ and $c,d\in\mathbb R$, where $\theta_c$ is the shift operator: $\theta_c\circ g(t):=g(t+c), t\in\mathbb R$; $\theta_c\circ \Gamma:=\Gamma-c, \Gamma\in M_P$, and $(g+d)(t):=g(t)+d$.
\item (Scaling compatibility) $I(d\cdot g(c ~\cdot))=c^{-1}I(g(\cdot))$ for all $g\in D'$ and $c,d\in\mathbb R^+$.
\end{enumerate}
\end{defn}

By the above definition and the corresponding probabilistic symmetries possessed by $ss,si$ processes, it is clear that the distribution of the point process $I(\BX)$ will be both stationary and scaling invariant if the underlying process $\BX$ is $ss,si$ and $I$ is a compatible set. Then we have
\begin{lemma}\label{l:compatible}
Let $\BX$ be a $ss,si$ process with paths in $D'$, and $I$ be a compatible set. Then
$P(I(\BX) \text{ is dense in } {\mathbb R})+P(I(\BX)=\phi)=1$.
\end{lemma}
That is, a compatible set of a $ss,si$ process is either empty or dense. The proof of this lemma is very simple.
\begin{proof}
Assume that $P(I(\BX) \text{ is neither dense nor empty })>0$. By stationarity of $I(\BX)$, this implies that
$$
P(I(\BX)\neq \phi, d(0,I(\BX))>0)>0,
$$
where $d$ denotes the Euclidean distance between points or sets. However, since the distribution of $I(\BX)$ is invariant under rescaling, so is the distribution of $d(0, I(\BX))$, which implies that $d(0, I(\BX))\in\{0,+\infty\}$ almost surely, contradicting with the above result. Thus we conclude that
$$
P(I(\BX) \text{ is dense in } {\mathbb R})+P(I(\BX)=\phi)=1.
$$
\end{proof}

Now consider the set $S_r:=\{s\in S: r(s)=+\infty\}$. Note that $S_r$ is a compatible set. Therefore, by Lemma \ref{l:compatible}, $S_r$ is either dense or empty almost surely. Assume $S_r$ is dense. Then for any fixed $t>0$ and $s\in S_r$ such that $s<t$, $X(s)>X(t)$. Taking a sequence of such $s$ converging to $0$ leads to the result that $X(0)\geq X(t)$ almost surely. Hence by stationarity of increments, $X(t_1)\geq X(t_2)$ almost surely for any $t_1<t_2$, which implies that the process has monotonic paths. However, the definition of $S$ requires that both $l(s)$ and $r(s)$ be strictly positive for $s\in S$, thus $S$ should be empty if the path is monotonic, contradicting with the assumption that $S_r$ is dense. Therefore we conclude that $S_r$ is empty almost surely.

Symmetrically, $S_l:=\{s\in S: l(s)=+\infty\}$ is empty almost surely. As a result, $\nu({\mathbb R}^+\times\{+\infty\})=\nu(\{+\infty\}\times {\mathbb R}^+)=0$.

\end{proof}

Note that if $s$ is the location of a local maximum of $\BX$, then for any $a>0$, $s'=as$ is the location of a local maximum of $\BY$ defined by $Y(at)=X(t), t\in \mathbb R$. Moreover, $l(s')=al(s)$ and $r(s')=ar(s)$. Therefore by self-similarity, for any $A\in {\mathcal B}$, we have
\begin{equation}\label{e:trans}
\nu(a(A))=a^{-1}\nu(A),
\end{equation}
where $aA:=\{(al,ar): (l,r)\in A\}$, and the factor $a^{-1}$ on the right hand side comes from the fact that the measure $\nu$ still counts the expected number of qualified points in an interval with length 1 rather than length $a$.

Define bijection $\Psi: (l,r)\to (u,v)$ by
$$
u:=l, v:=\frac{l}{l+r},
$$
then the relation (\ref{e:trans}) becomes
$$
\nu'(\varphi^u_a(A'))=a^{-1}\nu'(A'), \quad A'\in {\mathcal B},
$$
where $\varphi^u_a(A')=\{(au,v): (u,v)\in A'\}$, and $\nu'=\nu\circ\Psi^{-1}$. This observation immediately leads to a factorization

\begin{equation}
\nu'=\eta\times\mu,
\end{equation}
where $\mu$ is a measure on $\left((0,1), {\mathcal B}(0,1)\right)$, $\eta$ is a measure on $\left({\mathbb R}^+, {\mathcal B}({\mathbb R}^+)\right)$ satisfying
$$
\eta(aB)=a^{-1}\eta(B)
$$
for any $a>0, B\in{\mathcal B}({\mathbb R}^+)$. Therefore we have
\begin{lemma}
$\eta$ is an absolutely continuous measure with density
$$
g(u)=cu^{-2}, \quad u>0
$$
for some positive constant $c$.
\end{lemma}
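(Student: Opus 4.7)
The plan is to reduce the claim to a one-dimensional scaling functional equation via the tail function $F(b) := \eta((b, \infty))$, and then identify $\eta$ with the measure of density $cu^{-2}$ by matching tails.

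First, I would establish that $F(b) < \infty$ for some (hence every) $b > 0$. This uses the fact, noted at the end of Section 2, that $\nu$ (and therefore $\nu' = \nu \circ \Psi^{-1}$) is finite on rectangles bounded away from the axes. Under $\Psi$, the set $\{(l,r): l \geq a,\, r \geq b\}$ corresponds to $\{(u,v): u \geq a,\, v \leq u/(u+b)\}$. Picking any $v_0 \in (0,1)$ with $\mu((0, v_0)) > 0$ (the degenerate case $\mu \equiv 0$ corresponds to no local maxima a.s.\ and is trivial) and taking $a$ large enough that $u/(u+b) \geq v_0$ for all $u \geq a$, the factorization $\nu' = \eta \times \mu$ yields
$$
\mu((0, v_0)) \cdot \eta((a, \infty)) \;\leq\; \nu'\bigl(\{u \geq a,\, v \leq u/(u+b)\}\bigr) \;<\; \infty,
$$
so $\eta((a, \infty)) < \infty$.

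Next, I would apply the scaling property of $\eta$ directly to $F$: for any $a, b > 0$,
$$
F(ab) \;=\; \eta\bigl(a \cdot (b, \infty)\bigr) \;=\; a^{-1} \eta((b, \infty)) \;=\; a^{-1} F(b).
$$
Setting $b = 1$ and writing $c := F(1)$ gives $F(a) = c/a$ for all $a > 0$; the finiteness established in the previous step propagates via this identity, so $c$ is a finite nonnegative constant. To conclude, let $\eta^\ast$ denote the measure on $(0, \infty)$ with density $cu^{-2}$. Then $\eta^\ast((b, \infty)) = c/b = F(b) = \eta((b, \infty))$ for every $b > 0$. Since $\{(b, \infty): b > 0\}$ is a $\pi$-system generating $\mathcal{B}((0, \infty))$ and both $\eta$ and $\eta^\ast$ are $\sigma$-finite (finite on each $(b, \infty)$), the standard uniqueness theorem for $\sigma$-finite measures forces $\eta = \eta^\ast$.

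The main obstacle is the first step: ruling out the possibility that $\eta((a, \infty)) = \infty$ for every $a > 0$. Everything afterwards is essentially a direct application of the scaling symmetry combined with a standard $\pi$-$\lambda$ uniqueness argument. The positivity of $c$ reflects the non-degenerate case in which local maxima of $\mathbf X$ exist in $[0,1]$ with positive probability, so that $\eta$ is not the zero measure.
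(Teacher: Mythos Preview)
Your proposal is correct and follows the natural route implied by the paper, which in fact states the lemma without proof as an immediate consequence of the scaling relation $\eta(aB)=a^{-1}\eta(B)$. Your argument via the tail function $F(b)=\eta((b,\infty))$, the functional equation $F(ab)=a^{-1}F(b)$, and the $\pi$-system uniqueness of $\sigma$-finite measures is exactly the way to make that step rigorous; the only addition over the paper's implicit reasoning is your explicit verification that $F$ is finite, which you handle correctly by pulling back $[a,\infty)\times[b,\infty)$ through $\Psi$ and using the factorization together with the finiteness of $\nu$ away from the axes.
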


The following theorem reveals a key relation between $f(t)$ and $\nu$.

\begin{theorem}\label{t:frame}
Let $\{X(t)\}_{t\in\mathbb R}$ be $H-ss,si$ for $H>0$, satisfying Assumption U. $f(t)$ is the density in $(0,1)$ of $\tau_\BX$, and $\nu$ is defined as in (\ref{e:measure}). Then
$$
f(t)=\nu([t,\infty)\times[1-t,\infty)),\quad t\in(0,1).
$$
\end{theorem}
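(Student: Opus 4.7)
The plan is to derive the formula by a Campbell-type disintegration of the three-dimensional point process of local maxima together with their one-sided excursion lengths, exploiting only stationarity of increments for the spatial averaging and self-similarity implicitly via the earlier setup.

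First I would consider the marked point process $\Lambda := \sum_{s \in S} \delta_{(s,\, l(s),\, r(s))}$ on $\mathbb{R} \times (\overline{\mathbb{R}}^+)^2$. Because $\BX$ has stationary increments, the distribution of $\Lambda$ is invariant under shifts in the first coordinate, so its mean measure factors as $\mathrm{d}s \otimes \nu$, where $\nu$ is exactly the measure defined in \eqref{e:measure} (restriction to $s\in[0,1]$). This is the only place where I would use stationarity.

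Next I would identify the event $\{\tau_\BX \in (a,b)\}$, for $0 < a < b < 1$, with a single indicator for the point process $\Lambda$: under Assumption U, $\tau_\BX = s$ holds for a given $s$ if and only if $s$ is a local maximum in $(0,1)$ with $l(s) > s$ and $r(s) > 1-s$, since these two conditions express exactly that $X(u) < X(s)$ for all $u \in [0,1]\setminus\{s\}$. Hence
\begin{equation*}
P(\tau_\BX \in (a,b)) = E\Bigl[\sum_{s \in S} \mathbf{1}\{s \in (a,b),\ l(s) > s,\ r(s) > 1-s\}\Bigr] = \int_a^b \nu\bigl((t,\infty)\times(1-t,\infty)\bigr)\,\mathrm{d}t,
\end{equation*}
by applying the Campbell formula to $\Lambda$ with intensity $\mathrm{d}s\otimes\nu$. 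This already gives $f(t) = \nu\bigl((t,\infty)\times(1-t,\infty)\bigr)$ for Lebesgue-a.e.\ $t \in (0,1)$.

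The last step is to upgrade this a.e.\ identity to the stated pointwise identity with closed half-lines. I would use two ingredients: (i) from \cite{shen:2016}, $f$ can be taken c\`adl\`ag on $(0,1)$; (ii) the function $g(t) := \nu\bigl([t,\infty)\times[1-t,\infty)\bigr)$ is itself right-continuous, which I would verify by writing, for $t < t'$,
\begin{equation*}
g(t) - g(t') = \nu\bigl([t,t')\times[1-t,\infty)\bigr) \;-\; \nu\bigl([t',\infty)\times[1-t',1-t)\bigr),
\end{equation*}
and noting that each term is a measure of a set bounded away from the axes, hence finite, and shrinks to $\emptyset$ as $t' \downarrow t$. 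Since $g$ agrees with the (left-continuous-in-$t$ up to atoms) open version almost everywhere — because $\nu$ has at most countably many atoms on horizontal or vertical lines inside any region bounded away from the axes — and both the c\`adl\`ag density $f$ and $g$ are right-continuous and agree a.e., they must coincide pointwise.

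The main obstacle I anticipate is the careful handling of step (iii): justifying that open versus closed half-lines in the definition of $\nu(\cdot)$ make no difference almost everywhere, and then leveraging right-continuity of both sides to get the equality at every $t \in (0,1)$. The Campbell step itself is painless once one has the factorization of the mean measure, but the Assumption-U characterization of $\tau_\BX$ as a strict local maximum with $l(s) > s$ and $r(s) > 1-s$ is what cleanly turns the probability into an intensity integral, and deserves to be spelled out explicitly.
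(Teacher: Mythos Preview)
Your approach is correct and takes a genuinely different route from the paper's. The paper proceeds by an $\varepsilon$-sandwich: for small $\varepsilon$ it bounds $P(\tau_{\BX}\in[t,t+\varepsilon])$ between $\varepsilon\,\nu((t+\varepsilon,\infty)\times(1-t,\infty))$ and $\varepsilon\,\nu([t,\infty)\times[1-t-\varepsilon,\infty))$, divides by $\varepsilon$, and lets $\varepsilon\to 0$ using the c\`adl\`ag version of $f$; to identify the two limits it then invokes the self-similarity factorization $\nu'=\eta\times\mu$ (with $\eta$ having density $cu^{-2}$) to show that the boundary of $[t,\infty)\times[1-t,\infty)$ is $\nu$-null. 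Your Campbell-formula route replaces the local sandwich by a direct integral identity for $P(\tau_{\BX}\in(a,b))$, and replaces the self-similarity step by the observation that the relevant marginals of $\nu$ are $\sigma$-finite and hence have only countably many atoms, so the open and closed quadrants agree for all but countably many $t$. This is cleaner and, notably, never uses self-similarity --- your argument actually proves the identity for any stationary-increment process with paths in $D'$ satisfying Assumption~U. The paper's route, by contrast, leans on the factorization $\nu'=\eta\times\mu$ that is needed anyway for the spectral representation, so in context it incurs no extra cost.

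One point deserves more care. Your ``if and only if'' characterization asserts that $\tau_{\BX}=s\in(0,1)$ forces $l(s)>s$ strictly, but Assumption~U alone only yields $X(u)<X(s)$ for $u\in[0,s)$, hence merely $l(s)\ge s$. The strict inequality does hold, but it comes from the $D'$ path regularity rather than from Assumption~U: if $l(s)=s$ there would exist $v_n\uparrow 0^-$ with $X(v_n)\ge X(s)$, whence $X(0)\ge X(0-)\ge X(s)$, contradicting uniqueness of the maximizer on $[0,1]$. You should spell this out, since the open-versus-closed discrepancy is precisely what the last part of your argument is designed to handle, and without this step the Campbell identity would only give an inequality.
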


\begin{proof}
Let $0<\varepsilon <\min\{t,\frac{1-t}{2}\}$. Notice that
\begin{align*}
&\{\text{there exists }s\in S\cap [t,t+\varepsilon] \text{ satisfying } l(s)>t+\varepsilon, r(s)> 1-t\}\\
\subseteq & \{\tau_{\BX}\in[t,t+\varepsilon]\}\\
\subseteq & \{\text{there exists }s\in S\cap [t,t+\varepsilon] \text{ satisfying } l(s)\geq t, r(s)\geq 1-t-\varepsilon\}.
\end{align*}
Since for any point $s$ in the set on the left or the right side, $\min\{l(s), r(s)\}\geq\min\{t,1-t-\varepsilon\}>\varepsilon$, there can only be at most one such point in the interval $t,t+\varepsilon$. Thus by the stationarity of the increments,
\begin{align*}
& P(\{\text{there exists }s\in S\cap [t,t+\varepsilon] \text{ satisfying } l(s)>t+\varepsilon, r(s)> 1-t\})\\
=& E(|s_i\in S\cap[t,t+\varepsilon]: \epsilon_i\in(t+\varepsilon,\infty)\times(1-t, \infty)|)\\
=& \varepsilon E(|s_i\in S\cap[0,1]: \epsilon_i\in(t+\varepsilon,\infty)\times(1-t, \infty)|)\\
=& \varepsilon E\left(\sum_{\epsilon_i\in{\mathcal E}}\delta_{\epsilon_i}((t+\varepsilon,\infty)\times(1-t, \infty))\right)\\
=& \varepsilon \nu((t+\varepsilon,\infty)\times(1-t, \infty)),
\end{align*}
where $|\cdot|$ for a set gives the number of the elements in the set. Similarly,
\begin{align*}
&P(\{\text{there exists }s\in S\cap [t,t+\varepsilon] \text{ satisfying } l(s)\geq t, r(s)\geq 1-t-\varepsilon\})\\
=&\varepsilon \nu([t,\infty)\times[1-t-\varepsilon, \infty)).
\end{align*}
Thus
\begin{align*}
&\varepsilon \nu((t+\varepsilon,\infty)\times(1-t, \infty))\\
\leq &P(\tau_{\BX}\in[t,t+\varepsilon])\\
\leq &\varepsilon \nu([t,\infty)\times[1-t-\varepsilon, \infty)).
\end{align*}
Recall that $f(t)$ is right continuous with left limits, and equals to the right derivative of the cumulative distribution function of $\tau_{\BX}$ everywhere. Therefore by dividing all the expressions by $\varepsilon$ and taking limit $\varepsilon\to 0$, we have
$$
\lim_{\varepsilon\to 0}\nu((t+\varepsilon,\infty)\times(1-t, \infty))\leq f(t) \leq \lim_{\varepsilon\to 0}\nu([t,\infty)\times[1-t-\varepsilon, \infty)).
$$
It is not difficult to see that
$$
\Psi([t,\infty)\times[1-t,\infty))=\{(u,v):u\geq h(v,t)\},
$$
where
$$
h(v,t)=\begin{cases}
t & \quad 0<v<t,\\
\frac{v}{1-v}(1-t) & \quad t\leq v<1.
\end{cases}
$$
Hence
\begin{equation}\label{e:integral}
\begin{aligned}
&\nu([t,\infty)\times[1-t,\infty))\\
=&\nu'(\{(u,v):u\geq h(v,t)\})\\
=&\int_{v=0}^1\eta([h(v,t),\infty)) \mu(dv)\\
=&\int_{v=0}^1\left(\int_{u=h(v,t)}^\infty c u^{-2}du\right) \mu(dv).
\end{aligned}
\end{equation}
As a result, the boundary of the set $[t,\infty)\times[1-t,\infty)$ is a null set under $\nu$. Therefore
$$
\lim_{\varepsilon\to 0}\nu((t+\varepsilon,\infty)\times(1-t, \infty))=\lim_{\varepsilon\to 0}\nu([t,\infty)\times[1-t-\varepsilon, \infty))=\nu([t,\infty)\times[1-t,\infty)).
$$
\end{proof}

It is now straightforward to derive the following spectral-type result using relation (\ref{e:integral}).

\begin{theorem}\label{t:spectral}
Let $\{X(t)\}_{t\in\mathbb R}$ and $f(t)$ be defined as in Theorem \ref{t:frame}. Then
\begin{equation}\label{e:spectral}
f(t)=\int_0^1 f_v(t)\mu_1(dv), \quad 0<t<1,
\end{equation}
where
$$
f_v(t)=\begin{cases}
\frac{1-v}{-v\ln(v)-(1-v)\ln(1-v)}(1-t)^{-1} & \quad t\leq v,\\
\frac{v}{-v\ln(v)-(1-v)\ln(1-v)}t^{-1} & \quad t>v,\\
\end{cases}
$$
and $\mu_1$ is a sub-probability measure on $(0,1)$ (\textit{i.e.,} $\mu_1(0,1)\leq 1$).\\
\end{theorem}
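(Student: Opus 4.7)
The plan is to start from equation (\ref{e:integral}) in Theorem \ref{t:frame} and just reorganize the double integral so that the $v$-marginal carries all the ``shape'' information. First I would evaluate the inner integral explicitly:
\[
\int_{u=h(v,t)}^\infty c u^{-2}\,du = \frac{c}{h(v,t)},
\]
and split according to the definition of $h(v,t)$. For $v<t$ this gives $c/t$, while for $v\geq t$ it gives $c(1-v)/(v(1-t))$. Thus
\[
f(t)=\int_0^t \frac{c}{t}\,\mu(dv)+\int_t^1 \frac{c(1-v)}{v(1-t)}\,\mu(dv).
\]

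Next I would compare this to the stated $f_v(t)$. Observing that $f_v(t)$ has exactly the same $t$-dependence as the integrands above (namely $t^{-1}$ on $\{v<t\}$ and $(1-t)^{-1}$ on $\{v\geq t\}$), the natural candidate is
\[
\mu_1(dv) := \frac{c\cdot K(v)}{v}\,\mu(dv),\qquad K(v):=-v\ln v - (1-v)\ln(1-v).
\]
A direct substitution shows that both pieces of the split integral match the corresponding pieces of $\int_0^1 f_v(t)\mu_1(dv)$, which gives (\ref{e:spectral}).

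It remains to verify two things. First, that each $f_v$ is a genuine probability density on $(0,1)$; this is a short calculation:
\[
\int_0^v\frac{1-v}{K(v)(1-t)}dt+\int_v^1\frac{v}{K(v)\,t}dt=\frac{-(1-v)\ln(1-v)-v\ln v}{K(v)}=1.
\]
Second, that $\mu_1$ is a sub-probability measure on $(0,1)$. Integrating (\ref{e:spectral}) in $t$ and applying Fubini gives
\[
\mu_1(0,1)=\int_0^1\int_0^1 f_v(t)\,\mu_1(dv)\,dt=\int_0^1 f(t)\,dt = P\bigl(\tau_\BX\in(0,1)\bigr)\leq 1,
\]
using the fact, recalled in Section 2, that $\tau_\BX$ may place point masses only at $0$ and $1$.

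There is no real obstacle here; the only moderately delicate step is guessing the correct Radon--Nikodym factor $c K(v)/v$ relating $\mu_1$ to $\mu$, which is forced by the requirement that each $f_v$ integrate to $1$. Everything else is bookkeeping, justified by Fubini and the positivity of $\mu$.
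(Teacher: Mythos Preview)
Your proposal is correct and follows essentially the same route as the paper: both start from (\ref{e:integral}), evaluate the inner integral to $c/h(v,t)$, define $\mu_1$ via the Radon--Nikodym factor $cK(v)/v$ with respect to $\mu$, and then use $\int_0^1 f_v(t)\,dt=1$ together with $\int_0^1 f(t)\,dt\le 1$ to conclude that $\mu_1$ is a sub-probability measure. Your write-up is slightly more explicit in splitting the integral and checking the normalization of $f_v$, but there is no substantive difference in method.
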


\begin{proof}
The proof of Theorem \ref{t:spectral} is a simple rewriting of (\ref{e:integral}). More precisely, we have
$$
\begin{aligned}
f(t)= &\nu([t,\infty)\times[1-t,\infty))\\
=&\int_{v=0}^1\left(\int_{u=h(v,t)}^\infty c u^{-2}du\right) \mu(dv)\\
=&\int_{v=0}^1 ch^{-1}(v,t)\mu(dv)\\
=&\int_{v=0}^1f_v(t)c(v)\mu(dv),
\end{aligned}
$$
where
$$
c(v)=\frac{c(-v\ln(v)-(1-v)\ln(1-v))}{v}.
$$
Defining measure $\mu_1$ by $\frac{d\mu_1}{d\mu}(v)=c(v)$ leads to the desired expression. Finally, since $\int_{0}^1f_v(t)dt=1$ for any $v\in(0,1)$ and $\int_0^1f(t)dt\leq 1$, $\mu_1$ is a sub-probability measure. Notice that $\mu$ is not necessary a probability measure due to the potential mass of the distribution of the path supremum on the boundaries 0 and 1.
\end{proof}

The next result gives a universal entropy-type upper bound for the density function $f(t)$. It can be obtained using only the basic properties of self-similarity, but here we are going to prove it using the result of Theorem \ref{t:spectral}.

\begin{corollary}\label{p:bounds}
For any given $t\in(0,1)$,
\begin{equation}\label{e:upperbound}
f(t)\leq (-t\ln(t)-(1-t)\ln(1-t))^{-1}.
\end{equation}
\end{corollary}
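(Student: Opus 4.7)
The plan is to deduce the bound directly from the mixture representation in Theorem \ref{t:spectral}. Write $\phi(x):=-x\ln(x)-(1-x)\ln(1-x)$ for the binary entropy function, so that the claimed bound becomes $f(t)\leq \phi(t)^{-1}$. Since Theorem \ref{t:spectral} gives
\[
f(t)=\int_0^1 f_v(t)\,\mu_1(dv)
\]
with $\mu_1$ a sub-probability measure, it suffices to establish the \emph{pointwise} inequality $f_v(t)\leq \phi(t)^{-1}$ for every $v\in(0,1)$. Integrating this inequality against $\mu_1$ and bounding $\mu_1(0,1)\leq 1$ then yields the corollary.

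To verify the pointwise inequality, split according to the two cases in the definition of $f_v(t)$. For $t\leq v$, the bound $f_v(t)\leq \phi(t)^{-1}$ is equivalent to $\phi(t)/(1-t)\leq \phi(v)/(1-v)$, so I would show that $x\mapsto \phi(x)/(1-x)$ is nondecreasing on $(0,1)$. For $t>v$, the bound is equivalent to $\phi(t)/t\leq \phi(v)/v$, so I would show that $x\mapsto \phi(x)/x$ is nonincreasing on $(0,1)$. Both monotonicities are straightforward one-line calculus exercises: differentiating and using $\phi'(x)=\ln((1-x)/x)$, the numerator of the derivative of $\phi(x)/(1-x)$ simplifies to $-\ln(x)>0$, while the numerator of the derivative of $\phi(x)/x$ simplifies to $\ln(1-x)<0$.

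Note that equality in $f_v(t)\leq \phi(t)^{-1}$ is attained exactly at $v=t$ (both formulas collapse to $1/\phi(t)$ there), which tells us the bound is sharp and, in particular, is saturated precisely when $\mu_1$ puts all its mass at $v=t$. The only obstacle worth flagging is getting the two monotonicity directions correct; once the function $\phi$ is identified and the cases $t\leq v$ and $t>v$ are separated, the remaining work is the elementary differentiation outlined above, after which the conclusion follows by integrating against $\mu_1$.
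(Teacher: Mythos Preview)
Your proposal is correct and follows exactly the paper's approach: the paper's proof simply says that by Theorem \ref{t:spectral} it suffices to check the bound for each $f_v$, ``which can be done using fundamental calculus,'' and you have supplied precisely that calculus verification. The monotonicity computations you outline (with numerators $-\ln x$ and $\ln(1-x)$) are correct, so nothing further is needed.
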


\begin{proof}
By Theorem \ref{t:spectral}, it suffices to check (\ref{e:upperbound}) for $f_v(t)$ for all $0<v<1$, which can be done using fundamental calculus.
\end{proof}

 Corollary\ref{p:bounds} is a significant improvement of the corresponding result derived in \cite{shen:2016} for general processes with stationary increments but not necessarily with self-similarity. More precisely, the upper bound of $f(t)$ is improved from $\max\{t^{-1},(1-t)^{-1}\}$ to the current form. The factor of improvement varies from $-\ln(t)$ when $t\to 0$ and $-\ln(1-t)$ when $t\to 1$ to $\frac{2}{\ln(2)}$ when $t=\frac{1}{2}$.

\begin{remark}
In the excellent work of \cite{molchan:khokhlov:2004} the authors proved that
$$
f(t)\leq f(s)\max\left(\frac{s}{t},\frac{1-s}{1-t}\right)
$$
for any $s,t\in(0,1)$. In particular, $f$ is always continuous. Moreover, assuming the existence of the left and the right derivatives, denoted as $f'(t-)$ and $f'(t+)$ respectively, the above result easily leads to the following bounds:
\begin{equation}\label{e:derivative}
f'(t-)\leq \frac{f(t)}{1-t},
\end{equation}
\begin{equation}
f'(t+)\geq -\frac{f(t)}{t}.
\end{equation}

Our framework provides an alternative way to derive these bounds: similar as in Corollary \ref{p:bounds}, one can directly check that the above bounds are satisfied by all the basis functions $f_v(t), 0<v<1$, hence they must hold for all density functions $f$. This method also guarantees the existence of the left and the right derivatives.

\end{remark}

The following immediate corollary of Theorem \ref{t:spectral} gives bounds for the expectation of any function of the location of the path supremum. The proof is omitted.

\begin{corollary}\label{c:bounds}
Let $\{X(t)\}_{t\in\mathbb R}$ be $H$-ss,si for $H>0$, satisfying Assumption U, and $\tau_{\BX}$ be the location of its path supremum over $[0,1]$. Let $g$ be a bounded, or non-negative, measurable function on $[0,1]$. Then
$$
\begin{aligned}
&\min\left\{g(0), g(1), \inf_{v\in(0,1)}\int_0^1g(t)f_v(t)dt\right\}\\
\leq & {\mathbb E}(g(\tau_{\BX}))\\
\leq & \max\left\{g(0), g(1), \sup_{v\in(0,1)}\int_0^1g(t)f_v(t)dt\right\}.
\end{aligned}
$$
\end{corollary}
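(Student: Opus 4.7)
The plan is to leverage the spectral representation from Theorem \ref{t:spectral} to express $\mathbb{E}(g(\tau_{\BX}))$ as a convex combination of the three quantities $g(0)$, $g(1)$, and $\int_0^1 g(t)f_v(t)\,dt$ for $v\in(0,1)$, after which the inequalities follow from the trivial fact that a convex combination lies between its minimum and maximum component.

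First, I would recall from the discussion preceding Theorem \ref{t:spectral} (and from \cite{shen:2016}) that the distribution of $\tau_{\BX}$ consists of two point masses $p_0$ at $0$ and $p_1$ at $1$, plus an absolutely continuous part on $(0,1)$ with density $f(t)$. Using the spectral representation $f(t)=\int_0^1 f_v(t)\,\mu_1(dv)$ and the fact that each $f_v$ integrates to $1$ over $(0,1)$, I would verify (by Fubini, which is justified since $f_v\geq 0$) that the total mass on the interior is $\mu_1((0,1))$, so that
$$
p_0+p_1+\mu_1((0,1))=1.
$$
This identifies the three weights of a genuine probability mixture.

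Next, for $g$ either bounded or non-negative I would apply Fubini's theorem to write
$$
\mathbb{E}(g(\tau_{\BX}))=p_0\,g(0)+p_1\,g(1)+\int_0^1\!\left(\int_0^1 g(t)f_v(t)\,dt\right)\mu_1(dv).
$$
The right-hand side is a convex combination, with coefficients $p_0$, $p_1$, and $\mu_1(dv)$ (summing to $1$), of the values $g(0)$, $g(1)$, and $\int_0^1 g(t)f_v(t)\,dt$ for $v\in(0,1)$. Hence it is bounded above by $\max\{g(0),g(1),\sup_{v\in(0,1)}\int_0^1 g(t)f_v(t)\,dt\}$ and below by the analogous minimum, which is exactly the claim.

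There is no real obstacle here; the content of the corollary is essentially the observation that any sspectral mixture inherits moment bounds from its extreme mixands. The only small care needed is the Fubini step and the verification that the total mass equals one, both of which are routine given the integrability hypothesis on $g$ and the identity $\int_0^1 f_v(t)\,dt=1$ that was already noted inside the proof of Theorem \ref{t:spectral}.
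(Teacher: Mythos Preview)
Your argument is correct and is exactly the natural proof the paper has in mind: the paper calls this an ``immediate corollary'' of Theorem \ref{t:spectral} and omits the proof entirely, and your convex-combination argument via the decomposition $p_0+p_1+\mu_1((0,1))=1$ is the intended one-line justification.
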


Corollary \ref{c:bounds} can be used to derive, for example, the upper bound for the probability that the path supremum falls into an interval $[c,d]$: $P(\tau_{\BX}\in[c,d])$.

In many cases the process $\BX$ is time-reversible, \textit{i.e.}, $\{X_t\}_{t\in \mathbb R}\stackrel{d}{=}\{X(-t)\}_{t\in\mathbb R}$. For instance, all fractional Brownian motions are time-reversible. This property further improves the spectral-type representation result and the related bound.

\begin{proposition}
Let $\{X(t)\}_{t\in \mathbb R}$ be a time-reversible, $ss,si$ process, and $f(t)$ be the density in $(0,1)$ of the location of the path supremum for $\{X(t)\}_{t\in\mathbb R}$. Then
$$
f(t)=\int_0^{1/2}\tilde{f}_v(t)\tilde{\mu}_1(dv),
$$
where
$$
\tilde{f}_v(t)=\left\{
\begin{aligned}
& \frac{1}{2(-v\ln(v)-(1-v)\ln(1-v))}(1-t)^{-1} & & 0<t<v\\
& \frac{v}{2(-v\ln(v)-(1-v)\ln(1-v))}(t^{-1}+(1-t)^{-1}) &  & v\leq t < 1-v\\
& \frac{1}{2(-v\ln(v)-(1-v)\ln(1-v))}t^{-1} &  &1-v\leq t <1
\end{aligned}
\right.,
$$
and $\tilde{\mu}_1$ is a sub-probability measure on $\left(0,\frac{1}{2}\right]$.
\end{proposition}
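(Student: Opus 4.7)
The approach is to combine the spectral representation from Theorem \ref{t:spectral} with the symmetry $f(t) = f(1-t)$ coming from time reversibility, and then ``fold'' the integral in \eqref{e:spectral} around $v = 1/2$. The key algebraic ingredient is the pointwise identity $f_v(1-t) = f_{1-v}(t)$, visible by direct inspection of the two-case definition of $f_v$ in Theorem \ref{t:spectral}.

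First I would establish $\tau_{\BX} \eid 1 - \tau_{\BX}$. Setting $Y(t) := X(-t)$, the location of the supremum of $\BY$ on $[0,1]$ equals $-\tau^{[-1,0]}_{\BX}$, the negative of the location of the supremum of $\BX$ on $[-1,0]$. A standard shift argument using stationarity of increments (together with the fact that adding a constant to a path does not change the argmax) gives $\tau^{[-1,0]}_{\BX} \eid \tau_{\BX} - 1$, and hence $\tau_{\BY} \eid 1 - \tau_{\BX}$. Time reversibility gives $\BY \eid \BX$, so $\tau_{\BX} \eid 1 - \tau_{\BX}$, which translates into $f(t) = f(1-t)$ on $(0,1)$. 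Combined with Theorem \ref{t:spectral} and the identity $f_v(1-t) = f_{1-v}(t)$, this yields $f(t) = \int_0^1 f_{1-v}(t)\, \mu_1(dv)$, and averaging with the original spectral expression produces $f(t) = \int_0^1 \tfrac{1}{2}\bigl(f_v(t) + f_{1-v}(t)\bigr)\, \mu_1(dv)$. A short case analysis on the three regions $0 < t < v$, $v \leq t < 1-v$, $1-v \leq t < 1$ (with $v \in (0, 1/2]$) shows that $\tfrac{1}{2}(f_v(t) + f_{1-v}(t))$ agrees with the function $\tilde{f}_v(t)$ written in the statement, and that this quantity is invariant under $v \mapsto 1-v$.

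Finally I would define $\tilde{\mu}_1$ to be the pushforward of $\mu_1$ under the folding map $q(v) := \min(v, 1-v)$, so that $\tilde{\mu}_1(A) = \mu_1(q^{-1}(A))$ for Borel $A \subseteq (0, 1/2]$. The pushforward change-of-variables formula, together with the $v \leftrightarrow 1-v$ invariance of $\tilde{f}_v$, then gives
\[\int_{(0, 1/2]} \tilde{f}_v(t)\, \tilde{\mu}_1(dv) = \int_{(0,1)} \tilde{f}_{q(v)}(t)\, \mu_1(dv) = \int_{(0,1)} \tfrac{1}{2}\bigl(f_v(t) + f_{1-v}(t)\bigr)\, \mu_1(dv) = f(t),\]
and the total mass is $\tilde{\mu}_1((0, 1/2]) = \mu_1((0,1)) \leq 1$, so $\tilde{\mu}_1$ is indeed a sub-probability measure. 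I do not anticipate any real obstacle here; the only mild subtlety is that a potential atom of $\mu_1$ at $v = 1/2$ must not be double-counted, but the pushforward description handles this automatically.
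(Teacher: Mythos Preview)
Your argument is correct and lands on the same formula $\tilde f_v=\tfrac12(f_v+f_{1-v})$ that the paper uses, so this is essentially the paper's approach. The one point of contrast is the justification for the folding: the paper asserts directly that the measure $\mu_1$ from Theorem~\ref{t:spectral} is symmetric under $v\mapsto 1-v$ (which follows by tracing time-reversibility back through the point-process construction of $\nu$ and the change of variables $\Psi$), whereas you bypass this by working purely at the level of the density, using $f(t)=f(1-t)$ together with the algebraic identity $f_v(1-t)=f_{1-v}(t)$ to symmetrize the integrand before taking the pushforward under $v\mapsto\min(v,1-v)$. Your route is slightly more self-contained, since it does not require revisiting how $\mu_1$ was built; the paper's route, on the other hand, tells you a bit more, namely that the specific $\mu_1$ produced by the construction is itself symmetric, not merely that some symmetric choice exists.
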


To see this result, simply use the fact that $\mu_1$ in Theorem \ref{t:spectral} now needs to be symmetric due to the time-reversibility, then define $\tilde{f}_v(t)=\frac{1}{2}(f_v(t)+f_{1-v}(t))$. We omit the details. The corresponding upper bound for $f(t)$ becomes
$$
f(t)\leq\left\{
\begin{aligned}
&\frac{1}{2(1-t)}(-t(\ln(t))-(1-t)\ln(1-t))^{-1} & & 0<t<\frac{1}{2},\\
&\frac{1}{2t}(-t(\ln(t))-(1-t)\ln(1-t))^{-1} & & \frac{1}{2}\leq t<1.
\end{aligned}
\right.
$$

We end this section by generalizing the results to other random locations such as the location of the largest jump in a fixed interval.

In \cite{samorodnitsky:shen:2013} the authors introduced the notion of \textit{intrinsic location functional}, which is a large family of random locations including the location of the path supremum, the first hitting time to a fixed level, among many others. It was later shown in \cite{shen:2016} that there exists an equivalent characterization of the intrinsic location functionals using partially ordered random sets, which we take here as the definition.

Let $H$ be a space of real valued functions on $\mathbb R$, closed under translation. That is, for any $f\in H$ and $c\in\mathbb R$, $\theta_c f\in H$. Let $\mathcal I$ be the set of all compact, non-degenerate intervals in $\mathbb R$.\\

\begin{defn} [\cite{shen:2016}]
A mapping $L=L(f,I)$ from $H\times \mathcal I$ to ${\mathbb R}\cup \{\infty\}$ is called an intrinsic location functional, if
\begin{enumerate}
\item $L(\cdot, I)$ is measurable for $I\in\mathcal I$;
\item For each function $f\in H$, there exists a subset $S(f)$ of $\mathbb R$, equipped with a partial order $\preceq$, satisfying:
    \begin{enumerate}
    \item For any $c\in \mathbb R$, $S(f)=S(\theta_cf)+c$;
    \item For any $c\in\mathbb R$ and any $t_1, t_2\in S(f)$, $t_1\preceq t_2$ implies $t_1-c\preceq t_2-c$ in $S(\theta_cf)$,
    \end{enumerate}
    such that for any $I\in \mathcal I$, either $S(f)\cap I=\phi$, in which case $L(f,I)=\infty$, or $L(f,I)$ is the maximal element in $S(f)\cap I$ according to $\preceq$.\\
\end{enumerate}
\end{defn}

Briefly, an intrinsic location functional always takes the maximal element in a random set in the interval of interest, according to some partial order. Infinity was added as a possible value to deal with the case where some random location may not be well-defined for certain path and interval.

For the case of the location of the path supremum over an interval, the set $S(f)$ is the set of all the points $t\in\mathbb R$ such that $f(t)$ is the supremum of $f$ in either $[t-s,t]$ or $[t,t+s]$ (or both) for some $s>0$, and the order $\preceq$ is the natural order of the value $f(t)$. A review of the proofs in this section shows that they did not use any specific properties of the location of the path supremum, but rather two general properties that this location possesses, in terms of its partially ordered random set representation:

\begin{enumerate}
\item The set $S$ is a compatible set, as defined in Definition \ref{d:compatible}, with space $D'$ replaced by $H$;
\item The partial order $\preceq$ is also compatible with rescaling. That is, $t_1\preceq t_2$ in $S(f(\cdot))$ implies $c^{-1}t_1\preceq c^{-1}t_2$ in $S(d\cdot f(c~\cdot))$ for $c,d\in{\mathbb R}^+$.
\end{enumerate}

Consequently, all the results in Section \ref{s:general} can be generalized to any intrinsic location functional satisfying the two properties above. In particular, the spectral-type result, Theorem \ref{t:spectral}, and its two corollaries, also apply to the location of the largest jump in $[0,1]$, defined as
\begin{equation}\label{e:jump}
\delta_{\BX}:=\inf\left\{t\in [0,1]: |X(t)-X(t-)|=\sup_{s\in[0,1]}|X(s)-X(s-)|\right\},
\end{equation}
or the location of the largest drawdown, by considering only the downward jumps, among others.

\section{Supremum location of self-similar L\'{e}vy processes}

In this section we consider the special case of self-similar L\'{e}vy processes, which are ss,si processes with independent increments. Recall that in order to make the location of the path supremum well-defined, we are using the upper semicontinuous modification of the L\'{e}vy process.

\begin{proposition}\label{p:levy}
Let $\{X(t)\}_{t\in\mathbb R}$ be a self-similar L\'{e}vy process with exponent $H>0$ and satisfying Assumption U, and $\nu$ be the same as previously defined. Then
$$
\nu=\nu_1\times\nu_2,
$$
where $\nu_1$ and $\nu_2$ are measures on $(0,\infty)$, with survival functions $\overline{F}_1(l):=\nu_1(l,\infty)=l^{-c_1}$ and $\overline{F}_2(r):=\nu_2(r,\infty)=c_0r^{-c_2}$, respectively. The constants $c_0, c_1, c_2>0$ and $c_1+c_2=1$.
\end{proposition}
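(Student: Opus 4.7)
The plan is to proceed in two steps: first, use the independence of past and future of a L\'evy process to show that $\nu$ is a product measure $\nu_1\times\nu_2$ on the $(l,r)$-coordinate plane; second, feed this product structure into the scaling identity (\ref{e:trans}) already derived for general $ss,si$ processes to force each factor to be a power law and the two exponents to sum to $1$.

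For the product-structure step, the key fixed-time observation is that $l(s)$ is a measurable functional of the reflected past path $\{X(s)-X(s-u):u\geq 0\}$, while $r(s)$ is a functional of the forward path $\{X(s+u)-X(s):u\geq 0\}$; for a L\'evy process these two processes are independent at every deterministic $s$ by independence of increments over disjoint intervals. To lift this into a statement about the mean measure $\nu$, I would translate the claim into a Campbell-type identity starting from
$$
\varepsilon\,\nu\bigl((a,\infty)\times(b,\infty)\bigr)=E\bigl[\#\{s\in S\cap[0,\varepsilon]: l(s)>a,\ r(s)>b\}\bigr],
$$
which is valid by stationarity of increments. For $\varepsilon<\min(a,b)$ the count on the right is $\{0,1\}$-valued, so the right-hand side is a probability depending only on the restriction of the path to $[-a,\varepsilon+b]$. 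The event that some candidate $s\in[0,\varepsilon]$ is a local maximum with $l(s)>a$ and $r(s)>b$ decomposes into a left-window event (about the past path up to $s$) and a right-window event (about the future path after $s$). Discretizing $[0,\varepsilon]$ into fine bins and applying independence of $X$ across the separating grid points, or equivalently appealing to the strong Markov property at the candidate time $s$, shows that the joint probability factorizes; passing $\varepsilon\to 0$ yields $\nu((a,\infty)\times(b,\infty))=\overline F_1(a)\,\overline F_2(b)$ and hence $\nu=\nu_1\times\nu_2$.

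With the product form in hand, the scaling identity $\nu(aA)=a^{-1}\nu(A)$ applied to rectangles $(l,\infty)\times(r,\infty)$ becomes
$$
\overline F_1(al)\,\overline F_2(ar)=a^{-1}\overline F_1(l)\,\overline F_2(r),\qquad a,l,r>0.
$$
The ratio $\overline F_1(al)/\overline F_1(l)$ is therefore a function of $a$ alone; the standard Cauchy-type argument for measurable multiplicative functions on $(0,\infty)$ then forces $\overline F_1(l)=C_1l^{-c_1}$ and, symmetrically, $\overline F_2(r)=C_2 r^{-c_2}$, with the constraint $c_1+c_2=1$. Absorbing the overall normalization into the second factor (setting $c_0:=C_1C_2$ and rescaling $\nu_1$ so that $\overline F_1(1)=1$) gives the stated form. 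Positivity $c_1,c_2>0$ is then forced by monotonicity together with Proposition~3.1: $c_i<0$ contradicts $\overline F_i$ being decreasing, while $c_i=0$ would make $\overline F_i$ constant and hence put positive mass at $\{+\infty\}$, contradicting the no-mass-at-infinity conclusion of Proposition~3.1; finally $c_0>0$ since $\nu$ is nontrivial (local maxima exist a.s.).

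The main obstacle I anticipate is the rigorous passage from fixed-time independence to independence under the mean measure $\nu$: although $l(s)$ and $r(s)$ depend on disjoint pieces of the path for each deterministic $s$, the very selection of $s$ as a local maximum couples both sides, and for $\alpha$-stable L\'evy processes the set $S$ is countable but dense so the bookkeeping must be careful. The cleanest resolution is to work directly with the Palm mark distribution of the stationary marked point process $\sum_{s\in S}\delta_{(s,l(s),r(s))}$ and verify that it factors, using the strong Markov property applied at the candidate local-max instants; the subsequent scaling-to-power-law step is then essentially automatic.
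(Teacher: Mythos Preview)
Your proposal is correct and follows the same overall architecture as the paper: first factorize $\nu$ into a product using independence of increments, then feed the product form into the scaling relation (\ref{e:trans}) to obtain the Cauchy-type functional equation $\overline F_1(al)\overline F_2(ar)=a^{-1}\overline F_1(l)\overline F_2(r)$, and solve for power laws with $c_1+c_2=1$.

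The one noteworthy difference is in how the factorization step is set up. You propose to work with a small window $[0,\varepsilon]$, discretize, and invoke the strong Markov property or Palm theory to decouple the left and right marks; you rightly flag this as the delicate point. The paper sidesteps much of this machinery with a simple trick: it restricts at the outset to $l>1$, $r>1$, so that there is \emph{at most one} point of $S\cap[0,1]$ with $l(s)>1$ and $r(s)>1$; then $\nu((l,\infty)\times(r,\infty))$ is literally a probability, and one can condition on the event $E=\{\exists!\,s\in[0,1]:l(s)>1,r(s)>1\}$ and write the conditional probability as $P(l(s)>l\mid l(s)>1)\,P(r(s)>r\mid r(s)>1)$ by independence of increments across the (random) splitting point $s$. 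Self-similarity then removes the artificial restriction $l,r>1$. This is lighter than your Palm/discretization route, though both approaches leave the same residual subtlety you identified---that the selection of $s$ couples past and future---implicit. Your treatment of strict positivity of $c_1,c_2$ via Proposition~3.1 is actually more careful than the paper's, which appeals only to monotonicity.
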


\begin{proof}
Consider $\nu((l,\infty)\times(r,\infty))$ for $l,r$ satisfying $l>1, r>1$. Notice that by the construction of the set of points $\mathcal E$, there is at most one point in $(1,\infty)\times(1,\infty)$. Thus
\begin{align*}
& \nu((l,\infty)\times (r,\infty))\\
=& P(\text{there exists } s\in[0,1], \text{ such that } l(s)> l, r(s)> r)\\
=& P(l(s)> l, r(s)> r|E)P(E),
\end{align*}
where the event $E:=\{\text{there exists a unique } s\in[0,1], \text{ such that } l(s)> 1, r(s)> 1\}$. By the independence of increments, we further have
\begin{align*}
 & P(l(s)> l, r(s)> r|E)P(E)\\
= & P(l(s)> l|l(s)> 1)P(r(s)> r|r(s)> 1)P(E)\\
=: & \overline{F}'_1(l)\overline{F}'_2(r)P(E), \quad l>1, r>1.
\end{align*}
The condition $l>1$ and $r>1$ is not essential due to the self-similarity. Thus
$$
\nu((l,\infty)\times (r,\infty))\propto \overline{F}_1(l)\overline{F}_2(r)
$$
for some functions $\overline{F}_1$ and $\overline{F}_2$. Taking $A=(l,\infty)\times(r,\infty)$ in (\ref{e:trans}), we have
$$
\overline{F}_1(al)\overline{F}_2(ar)=a^{-1}\overline{F}_1(l)\overline{F}_2(r)
$$
for any $a>0$.

Standard procedure leads to the conclusion that the only solutions of this functional equation, which make both $\overline{F}_1$ and $\overline{F}_2$ non-increasing, are of the form
$$
\overline{F}_1(l)=c'l^{-c_1},
$$
$$
\overline{F}_2(r)=c''r^{-c_2},
$$
where $c_1,c_2>0$ and $c_1+c_2=1$. Finally $c'$ and $c''$ can obviously be combined as $c_0$, and put only in front of $\overline{F}_2$.
\end{proof}

Proposition \ref{p:levy} leads to a new way to prove the following result regarding the distribution of $\tau_{\BX}$, the supremum location for the self-similar L\'{e}vy process $\BX$ over [0,1]. This result was first established in \cite{chaumont:2013} by considering the joint distribution of the location and the value of the path supremum for stable L\'{e}vy processes. The special case of Brownian motion is well known and can be found in, for instance, \cite{shepp:1979}. Also, note that for non-constant L\'{e}vy processes, Assumption U is automatically satisfied.

\begin{theorem}
Let $\mathbf{X}=\{X(t)\}_{t\in\mathbb R}$ be a self-similar L\'{e}vy process with exponent $H>0$, and $\tau_{\BX}$ the location of its path supremum over $[0,1]$. Then one of the three following scenarios is true:\\
1. $\BX$ is almost surely decreasing, hence $P(\tau_{\BX}=0)=1$;\\
2. $\BX$ is almost surely increasing, hence $P(\tau_{\BX}=1)=1$;\\
3. $\BX$ is not monotone, $\tau_{\BX}\sim Beta(1-c_1,1-c_2)$, where $c_1, c_2>0$ and $c_1+c_2=1$.
\end{theorem}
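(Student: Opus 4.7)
The plan is to combine Proposition \ref{p:levy} with Theorem \ref{t:frame}, dispose of the two monotone scenarios separately, and invoke Blumenthal's $0$--$1$ law to exclude point masses at the endpoints in the non-monotone case.

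First I would dispose of scenarios 1 and 2. If the L\'{e}vy path is a.s.\ non-increasing, the supremum over $[0,1]$ is attained at the left endpoint, so $\tau_{\BX}=0$ a.s.; symmetrically, an a.s.\ non-decreasing path gives $\tau_{\BX}=1$ a.s. These two cases are mutually exclusive with the non-monotone one.

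Now assume $\BX$ is not monotone. Substituting the product factorization $\nu=\nu_1\times\nu_2$ from Proposition \ref{p:levy} into Theorem \ref{t:frame} yields
$$
f(t)=\nu_1([t,\infty))\,\nu_2([1-t,\infty))=c_0\,t^{-c_1}(1-t)^{-c_2},\quad t\in(0,1).
$$
Since $c_1+c_2=1$ with $c_1,c_2>0$, both $1-c_1$ and $1-c_2$ lie in $(0,1)$, so the right-hand side is integrable on $(0,1)$ and has exactly the Beta$(1-c_1,1-c_2)$ shape, up to the constant $c_0$. To conclude that $\tau_{\BX}$ really is Beta$(1-c_1,1-c_2)$ it remains to show $P(\tau_{\BX}=0)=P(\tau_{\BX}=1)=0$, which then pins down $c_0=1/B(1-c_1,1-c_2)$.

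For the left endpoint, I would use $\{\tau_{\BX}=0\}\subseteq A:=\{\sup_{s\in[0,1]}X(s)\leq 0\}$ and note that self-similarity gives $P(A)=P(\sup_{s\in[0,1/n]}X(s)\leq 0)$ for every $n\geq 1$; since these events increase in $n$ and their union is measurable with respect to the germ $\sigma$-field $\mathcal{F}_{0+}$, Blumenthal's $0$--$1$ law forces $P(A)\in\{0,1\}$. If $P(A)=1$, self-similarity extends $X(s)\leq 0$ to all $s\geq 0$ a.s., and stationary increments then give $X(s+t)-X(s)\eqd X(t)\leq 0$ a.s.\ for every fixed $s,t\geq 0$; c\`{a}dl\`{a}g paths promote this to a.s.\ monotonicity of $\BX$, contradicting non-monotonicity. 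A symmetric argument (using the time-reversed Lévy process, or directly by considering $\{X(s)\geq X(1)\text{ for all }s\in[0,1]\}$) handles the right endpoint. Hence $f$ integrates to $1$ and $\tau_{\BX}\sim\mathrm{Beta}(1-c_1,1-c_2)$.

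The main obstacle is precisely this no-boundary-mass step: the density formula itself is a one-line substitution once Proposition \ref{p:levy} and Theorem \ref{t:frame} are in hand, and the two monotone cases are immediate. The heart of the argument is the germ-field reasoning that converts "self-similar and not monotone" into "the supremum almost surely lies in the interior of $[0,1]$."
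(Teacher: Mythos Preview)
Your argument is correct, and the density computation plus the disposal of the monotone cases match the paper exactly. Where you diverge is in the no-boundary-mass step. The paper proceeds internally to its own framework: assuming $P(\tau_{\BX}=0)>0$, self-similarity forces $P(\tau_{\BX,T}=0)$ to be constant in $T$, hence $P(\tau_{\BX,\infty}=0)\in(0,1)$; independence of increments then gives $P(\tau_{\BX,\infty}\in(0,1])>0$, which contradicts Lemma \ref{l:compatible} (the set $\{t:X(t)=\sup_{s\geq t}X(s)\}$ is compatible, hence dense or empty, so $\tau_{\BX,\infty}\in\{0,\infty\}$ a.s.). You instead route through Blumenthal's $0$--$1$ law: the events $A_n=\{\sup_{[0,1/n]}X\leq 0\}$ all have the same probability as $A=A_1$ by self-similarity, they increase, and their union lies in $\mathcal F_{0+}$, so $P(A)\in\{0,1\}$; $P(A)=1$ forces monotonicity. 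Both arguments are short and valid. Yours uses a standard L\'evy tool and avoids invoking the paper's compatible-set machinery; the paper's version stays within the structural framework it built and would extend to any $ss,si$ setting where Lemma \ref{l:compatible} applies, not just L\'evy processes (though independence is still used to manufacture the contradiction). Your handling of the right endpoint via time reversal is fine, since the time-reversed process is again a self-similar L\'evy process.
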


\begin{proof}
The first two cases are trivial. Now let us consider the case where $\BX$ is not monotone. It is clear by Theorem \ref{t:frame} and Proposition \ref{p:levy} that the density function of $\tau_{\BX}$ in $(0,1)$, $f(t)$, satisfies
$$
f(t)\propto t^{-c_1}(1-t)^{-c_2}, \quad 0<t<1.
$$
Therefore it suffices to prove that when $\BX$ is not monotone, $P(\tau_{\BX}=0)=P(\tau_{\BX}=1)=0$. Here we show that $P(\tau_{\BX}=0)=0$. Once this is done, $P(\tau_{\BX}=1)=0$ follows in a symmetric way.

Suppose $P(\tau_{\BX}=0)>0$. Self-similarity implies that
\begin{equation}\label{e:nopointmass}
P(\tau_{\BX,T}=0)=P(\tau_{\BX}=0)
\end{equation}
for any $T\geq 0$. Define
$$
\tau_{\BX,\infty}=\inf\{t\geq 0: X(t)=\sup_{s\geq 0} X(s)\}=\inf\{t\geq 0: X(t)=\sup_{s\geq t} X(s)\}
$$
to be the location of the path supremum of $\BX$ over $[0,\infty)$. In the case where the supremum is never achieved, define $\tau_{\BX,\infty}=\inf(\phi)=\infty$. Taking $T\to\infty$ in (\ref{e:nopointmass}), we have
$$
P(\tau_{\BX,\infty}=0)=P(\tau_{\BX}=0)>0.
$$

On the other hand, since $\tau_{\BX,\infty}\geq \tau_{\BX}$, and $\BX$ is not monotone, $P(\tau_{\BX,\infty}=0)\leq P(\tau_{\BX}=0)<1$. By the independence and stationarity of the increments, we have
\begin{align*}
&P(\tau_{\BX,\infty}\in(0,1])\\
=&P(\tau_{\BX}\in(0,1], \tau_{\BX}=\tau_{\BX,\infty})\\
\geq &P(\tau_{\BX}\in(0,1], X(1)=\sup_{s\geq 1}X(s))\\
=&P(\tau_{\BX}\in(0,1])P(X(1)=\sup_{s\geq 1}X(s))\\
=&P(\tau_{\BX}\in(0,1])P(\tau_{\BX,\infty}=0)\\
>&0.
\end{align*}
However, notice that the set $\{t\in {\mathbb R}: X(t)=\sup_{s\geq t} X(s)\}$ is a compatible set. Thus by Lemma \ref{l:compatible}, it is either dense in $\mathbb R$ or empty. As a result, $\tau_{\BX,\infty}=0$ or $\tau_{\BX,\infty}=\infty$ almost surely. This contradicts $P(\tau_{\BX,\infty}\in(0,1])>0$. Thus we conclude that the assumption can not be true, in other words, $P(\tau_{\BX}=0)=0$.
\end{proof}

\section*{Acknowledgements} The author would like to thank Xiaofei Shi for valuable input and discussions. The author acknowledges support from the Natural Sciences and Engineering Research Council of Canada (NSERC Discovery Grant number 469065).



\end{document}